\def\old#1{}
\newif\ifdeveloping
\newif\ifshortversion
\newtheorem{theorem}{Theorem}[section]
\newtheorem{corollary}[theorem]{Corollary}
\newtheorem{lemma}[theorem]{Lemma}
\def\case{\begin{array}{c@{\quad \mbox{if} \quad} l }}
\def\EE{\mathcal{E}}
\def\PP{\mathcal{P}}
\def\DD{\mathcal{D}}
\def\sign{{\rm sign}}
\newcommand{\prtime}{{\count0=\time\divide\count0 by 60
     \count1=-\count0\multiply\count1 by 60 \advance\count1 by \time
     \the\count0:\the\count1} }
\def\myheads#1;#2;{
\pagestyle{myheadings} \markboth{{\sc\hfill
     #1\hfill\protect\makebox[0cm][r]{\rm\today; \prtime}}}
     {{\sc\protect\makebox[0cm][l]{\rm\today;\ \prtime}\hfill
     #2\hfill}} \thispagestyle{myheadings} }
     \def\old#1{}
\title{Asymptotically normal distribution of some  tree families  relevant for phylogenetics, and of partitions without singletons}
\author{
     \'Eva Czabarka\footnote{The first two and last two authors were supported in part by the HUBI  MTKD-CT-2006-042794.}\\
     {\tt czabarka@math.sc.edu}\\
     University of South Carolina, Columbia, SC 29208, USA\\
        P{\'e}ter L. Erd{\H o}s\footnote{This author was supported in part
     by the Hungarian NSF contracts Nos T37846, T34702, T37758.},\\
     {\tt elp@renyi.hu}\\
     Alfr{\'e}d R{\'e}nyi Institute, 13-15 Re{\'a}ltanoda u., 1053 Budapest, Hungary\\
      Virginia Johnson\footnote{The first and third authors were  partially supported by a grant from the University of South Carolina
Promising Investigator Research Award.}\\
      {\tt  johnsonv@math.sc.edu}\\
     University of South Carolina, Columbia, SC 29208, USA \\
     Anne Kupczok\footnote{Complete affiliation:
Center of Integrative Bioinformatics Vienna (CIBIV), Max F. Perutz Laboratories (MFPL), University of Vienna, Medical University of Vienna, University of Veterinary Medicine, Vienna; current address:
Institute of Science and Technology, Austria, Am Campus 1, 3400 Klosterneuburg, Austria.}\\
     Center for Integrative Bioinformatics Vienna (CIBIV)\\
     {\tt anne.kupczok@ist.ac.at}\\
     L\'aszl\'o A. Sz\'ekely\footnote{This author was supported in part by the
     NSF DMS contracts No. 0701111 and 1000475, the NIH NIGMS contract  3R01GM078991-03S1, and by the Alexander von Humboldt Foundation at the Rheinische
     Friedrich-Wilhelms Universit\"at, Bonn.}\\
     {\tt szekely@math.sc.edu}\\
     University of South Carolina, Columbia, SC 29208, USA
     }
     \date{}
\begin{document}
\maketitle
%\ifdeveloping \myheads{Czabarka et. al.}; {Felsenstein trees}; 
Mathematics Subject Classification 2010:  05A15; 05A16; 05A18; 05C05

Keywords: set partition; generating function; tree; phylogeny; asymptotic enumeration; central limit theorem; local limit theorem %; asymptotically normal distribution 

%\newpage
\begin{abstract} P.L. Erd\H os and L.A. Sz\'ekely  [{\sl Adv.  Appl. Math.} {\bf 10}(1989), 488--496]   gave  a bijection between  rooted semilabeled trees and set partitions.    L.H. Harper's results [{\sl Ann. Math.
Stat.} {\bf 38}(1967), 410--414] on the asymptotic normality
of the Stirling numbers of the second kind translates into 
asymptotic normality  of rooted semilabeled trees with given number of vertices, when the 
number of internal vertices varies. 
The Erd\H{o}s-Sz\'ekely bijection  specializes to a bijection between phylogenetic trees and set partitions with classes of size $\geq 2$. We consider modified Stirling numbers of the second kind 
 that enumerate partitions of a fixed set
into a given number of classes of size $\geq 2$, and 
obtain their asymptotic normality as the number of classes varies.
The Erd\H{o}s-Sz\'ekely bijection 
translates this result into the
asymptotic normality  of the number of phylogenetic trees with given number of vertices, when the 
number of leaves varies. 
We also obtain asymptotic normality of the number of phylogenetic trees with given number of
leaves and varying number of internal vertices, which make more sense to students of phylogeny.
By the Erd\H{o}s-Sz\'ekely bijection this means the asymptotic normality of the number
of partitions of $n+m$ elements into $m$ classes of size $\geq 2$, when $n$ is fixed and
$m$ varies. The proofs are adaptations of the techniques of L.H. Harper [ibid.].
We provide asymptotics for the relevant expectations and variances with error term
$O(1/n)$.
%We discuss related results at length.
\end{abstract}

\section{Semilabeled trees and set partitions}\label{semilabeled}
P\'eter Erd\H os and L\'aszl\'o  Sz\'ekely  \cite{anti} enumerated   $F(n,k)$, the number of {\sl rooted semilabeled trees} with $k$ uniquely
labeled leaves
and $n$ non-root vertices. Such trees have a root, which may or may not have  degree one, 
and is not being counted as vertex or leaf; and have $k$ leaves. Two such trees are identical, if there
is a graph isomorphism between them that maps root to root and every leaf label to the same
leaf label. The labels of the leaves come from the set $\{1,2,\ldots, k\}$ and labels are not repeated.

Erd\H os and Sz\'ekely in \cite{anti} established a bijection between the trees counted by
$F(n,k)$ and partitions of an $n$-element set into $n-k+1$ classes, under which out-degrees
of non-root vertices and the root correspond to class sizes in the partition. 
%A similar result was established independently by Haiman and Schmitt
%\cite{haiman}.
The cited result immediately
implies that $F(n,k)=S(n,n-k+1)$, where $S(a,b)$ denotes the {\sl Stirling number of the second kind} that enumerates partitions of an $a$-element set into $b$ non-empty classes; and  that 
$\sum_k F(n,k)=\sum_i S(n,i) =B_n$, the {\sl Bell number} \cite{encycl} A000110.
Any  information available on the Stirling numbers of the second 
kind translates for information on the $F$-numbers. 
For example, 
the recurrence relation 
\begin{equation} S(n,k)=S(n-1,k-1)+kS(n-1,k) \label{Snkrec}
\end{equation}
 translates to $F(n,k)=F(n-1,k)+
(n+1-k)F(n-1,k-1)$.  
However,  phylogeneticists are not interested in 
semilabeled trees
with internal vertices of degree 2 and with root degree 1. 
We  use the  term  {\sl phylogenetic tree}  for semilabeled trees that do not fall into these 
degenerate categories.   
Let $F^{\star}(n,k)$ denote the number of phylogenetic trees with $k$ leaves and $n$ non-root vertices,
and let  $S^{\star}(n,k)$ denote 
the number
of  partitions  of an $n$-element set into $k$ classes, such that each contains at least 2 elements.
The   Erd\H os--Sz\'ekely   bijection still provides
$F^{\star}(n,k)=S^{\star}(n,n-k+1)$  and $S^{\star}(n,i)=F^{\star}(n, n-i+1)$.

Felsenstein \cite{felspaper, felsbook}, and also Foulds and Robinson \cite{arscomb}
 investigated the numbers $T_{n,m}$. $T_{n,m}$ is
the number of rooted trees with $n$ labeled leaves, $m$ unlabeled internal vertices (the root
is one of them), where the root has degree at least 2 and no other internal vertices have degree 2.
Clearly  
\begin{equation} \label{csere}
T_{n,m}=F^{\star}(n+m-1, n)=S^{\star}(n+m-1,m).
\end{equation}
 If we are interested only in evaluating
certain $T_{n,m}$ numbers, formula (\ref{csere}) would suffice. However,
the  $T_{n,m}$ notation suggests that the distributions of  $F(n,k)$ and $F^{\star}(n,k)$  for large but fixed number of vertices $n$ and varying number of leaves $k$, albeit is mathematically interesting, not relevant for phylogenetics.
The relevant distribution for phylogenetics is large but fixed number of leaves $n$, and varying
number of internal vertices, with which total number of vertices varies as well.
Let $t_n=\sum_k T_{n,k}$ denote the number of all phylogenetic trees
with $n$ labeled leaves.  This sequence is A000311 in \cite{encycl}, which is the solution
to Schroeder's fourth problem \cite{schroder}. 

This paper proves central and local limit theorems for the arrays $S^\star(n,k)$ and
$T_{n,k}$, which translate into such results for $F^\star(n,i)$ and $S(n-1+m,m)$.
We compute the expectations and variances with $O(1/n)$ error term, to support
the phylogeneticists who may use our results to approximate certain large numbers.
The technique to be used is Harper's method \cite{harper}, and we heavily exploit
far-reaching asymptotic results on Bell numbers.

\section{Harper's method}
\label{method}
Harper \cite{harper} made a very elegant proof for the asymptotic normality of the
array $S(n,k)$.
We follow the interpretation
of Canfield \cite{canfield5} and  Clark \cite{clark}, who clarified and explained  the details  of \cite{harper}, although our discussion is somewhat restrictive.
Let $A(n,j)$ be an array of non-negative real 
numbers for $j=0,1,\ldots,d_n$, and  define $A_n(x)=\sum_j A(n,j)x^j $.
Observe that $\sum_j A(n,j)=A_n(1)$.
Let $Z_n$ denote the random variable, for which the probability 
$\PP(Z_n=j)=\frac{A(n,j)}{A_n(1)}$. 
In terms of $A_n(x)$, there is a well-known \cite{clark} and easy to 
verify expression 
for the expectation and variance of $Z_n$:
\begin{equation}\label{keyformula}
\EE(Z_n)= \frac{A^{\prime}_n(1)}{A_n(1)} \hbox{\ \ \ \rm and \ \ \ } \DD^2(Z_n)
=\frac{A^{\prime}_n(1)}{A_n(1)}+
\Biggl( \frac{A^{\prime}_n(x)}{A_n(x)} \Biggl)^{\prime} \Biggl|_{x=1}. 
\end{equation} 
As $\EE(Z_n)$ and $\DD(Z_n)$ are  determined by the array $A(n,j)$, we will also
write them as $\EE(A(n,.))$ and $\DD(A(n,.))$

The array $A(n,j)$ 
is called {\sl asymptotically normal} in the sense of a {\sl central limit theorem},
 if
\begin{equation}\label{Snkcentral}
\frac{1}{A_n(1)}\sum_{j=1}^{\lfloor x_n\rfloor} A(n,j)\rightarrow \frac{1}{\sqrt{2\pi}} \int_{-\infty}^xe^{-t^2/2}dt 
\end{equation}
as $n\rightarrow \infty$   uniformly in $x$, where
\begin{equation}\label{Snknormalization}
x_n=\EE(Z_n)+x\DD(Z_n).
\end{equation}

Assume now that all the roots of the polynomial
$A_n(x)$ are  non-positive real numbers, say
%(Notice that we already verified this property
%for $A(n,k)=S(n,k)$ in the lines after (\ref{last}).)  
%Let the roots of $A_n(x)$ be
$\{-y_{nk}: k=1,2,\ldots, d_n\}$. Define the independent
random variables $Y_{nk}$ by $\PP(Y_{nk}=0)=y_{nk}/(1+y_{nk})$ and
  $\PP(Y_{nk}=1)=1/(1+y_{nk})$.

Observe that  the probability generating function of
 the random variable $Z_n$ is  $A_n(x)/A_n(1)$; 
and the   probability generating function of the random variable $Y_{nk}$ is $\frac{x+y_{nk}}{1+y_{nk}}$.
Since the probability generating function of a sum of independent random variables is the product
of their probability generating functions, we have that the probability generating function of 
$\sum_k Y_{nk}$ is $\prod_{k=1}^{d_n} \frac{x+y_{nk}}{1+y_{nk}}$. However, as 
$$\prod_{k=1}^{d_n} \frac{x+y_{nk}}{1+y_{nk}}=\frac{A_n(x)}{A_n(1)},$$
we conclude that
$Z_n$ and  $\sum_k Y_{nk}$ have identical distribution.
%we may even write  $Z_n=\sum_k Y_{nk}$.  
Let $G_{nj}(x)=  \PP\bigl(\frac{Y_{nj}-\EE(Y_{nj})}{\DD(Z_n)}\le x\bigl)$ denote the cumulative
distribution function
of $ \frac{Y_{nj}-\EE(Y_{nj})}{\DD(Z_n)}$ for $j=1,\ldots, d_n$.  The Lindeberg--Feller Theorem applies
(\cite{durrett} pp. 98--101) to  the sequence $\frac{Z_n-\EE(Z_n)}{\DD_n(Z_n)}=\sum_j 
\frac{Y_{nj}-\EE(Y_{nj})}{\DD_n(Z_n)}$. The condition of the cited theorem, for all $\epsilon>0$
$$\lim_{n\rightarrow\infty} \sum_{j=1}^{d_n} \int_{|y|>\epsilon} y^2dG_{nj}(y)=0$$
follows from 
\begin{equation} \label{altvariance}
\lim_{n\rightarrow \infty} \DD(Z_n)=\infty.
\end{equation}
  Therefore, the cited theorem proves 
the normal convergence (\ref{Snkcentral}), provided (\ref{altvariance}) holds and all the
roots of the 
polynomials $A_n(x)$ have non-positive real numbers. 
%For $A(n,k)=
%S(n,k)$ this will be provided in (\ref{dSnk}) or in the formula above it.

A sequence $a_k$ is called {\sl unimodal}, if  first it increases, and then decreases.
An array $A(n,k)$ is called {\sl unimodal}, if for every $n$,
the sequence $a_k=A(n,k)$ is such.
 A sequence $a_k$, which is 0 for $k<t$ and $\ell<k$, with $a_t\not=0$ and  
$a_\ell\not=0$,
is called 
%{\sl  log-concave} (LC) if $a_k^2-a_{k-1}a_{k+1}\geq 0$, and 
 {\sl strictly log-concave} (SLC) if $a_k^2-a_{k-1}a_{k+1}>0$
for $t+1\leq k\leq \ell-1$.
An array $A(n,k)$ is called 
%{\sl  log-concave} 
 {\sl strictly log-concave} (SLC), if  for every fixed $n$, the sequence  $a_k=A(n,k)$
is such.  
It is well-known and easy to see that any SLC sequence is unimodal in
 the variable $k$.  \old{
However, some LC sequences may not be unimodal, like 0,1,1,0,0,1,1,0. However, LC 
sequences, which do not have 0 terms both preceded and followed by non-zero terms 
(have {\sl no internal zeroes} property) are
also unimodal.  
Dobson \cite{dobson} showed the unimodality of $S(n,k)$,  Klarner
was the first   to show
the SLC property of $S(n,k)$ (see \cite{lieb}). 
}
Using Newton's Inequality, Lieb \cite{lieb}
showed that if a polynomial $\sum_{k=1}^N C_kx^k$ has only real roots, then for
$k=2,\ldots,N-1$
\begin{equation} \label{newton}
C_k^2\geq C_{k+1}C_{k-1} \Bigl( \frac{k}{k-1}\Bigl)\Bigl( \frac{N-k+1}{N-k}\Bigl),
\end{equation}
and hence the $C_k$ sequence is SLC, and 
 showed the SLC property of $S(n,k)$ through (\ref{newton}).

E.R. Canfield \cite{canfield5} noted that for 
asymptotically normal sequences  (\ref{Snkcentral}),  the SLC property 
and $\DD(Z_n)\rightarrow \infty$
implies the following {\sl local limit
theorem}:
\begin{equation}\label{Snklocal}
 \lim_{n\rightarrow \infty} \frac{\DD(Z_n)}{A_n(1)}  A(n,\lfloor x_n\rfloor )=
 \frac{1}{\sqrt{2\pi}} e^{-x^2/2}
\end{equation}
uniformly in $x$. 
Furthermore, from the fact  that the convergence of the $A(n,j)$ numbers to the
Gaussian function is actually uniform, he concluded that the number 
$k=J_n$ maximizing $A(n,k)$ satisfies
\begin{equation} \label{Snkmax}
J_n- \EE(Z_n)=o(  \DD(Z_n) );
\end{equation}
and 
\begin{equation} \label{Snkmaxvalue}
A(n, J_n)\sim \frac{1}{\sqrt{2\pi}} \frac{A_n(1)}{\DD_n(Z_n)}.
\end{equation}

For the Stirling numbers of the second kind, $A(n,j)=S(n,j)$, $A_n(1)=B_n$, and one has
\begin{equation} \label{ExpS}
\EE(S(n,.))=\frac{B_{n+1}}{B_n}-1,
\end{equation}
  \begin{equation} \label{VarS}
\DD^2(S(n,.))=\frac{B_{n+2}}{B_n}- \Bigl(\frac{B_{n+1}}{B_n}  \Bigl)^2-1.
\end{equation}
Harper \cite{harper} showed that $\sum_k S(n,k)x^k$ has distinct nonpositive roots,
that (\ref{VarS}) goes to infinity, which is sufficient for 
the asymptotic normality of the Stirling numbers of the second kind.  Harper \cite{harper} already observed (\ref{Snklocal}) for $A(n,k)=S(n,k)$.

The SLC property of $S(n,k)$
implies the SLC property and unimodality of $F(n,k)$. 
Consequently, the $F(n,k) $ array is also asymptotically normal,   in the sense of both the
central and local  limit theorems, with $\EE(F(n,.))=n+1-\EE(S(n,.))$ and 
$\DD(F(n,.))=\DD(S(n,.))$.

\section{Asymptotics for Bell numbers}
Asymptotic formula for the Bell numbers, in terms of the solution of the  unique real solution of the equation  $r  e^r=n$, 
was obtained by Moser and Wyman \cite{moserwyman}: $B_n\sim (r+1)^{-\frac{1}{2}}
\exp[n(r+r^{-1}-1)-1](1-\frac{r^2(2r^2+7r+10)}{24n(r+1)^3})$. 
Iteration easily gives $r=r(n)=\ln n -\ln \ln n +O(1)$.
% (In this paper, $\ln$ always means natural logarithm.)
 The function $r(n)$ is also known as $LambertW(n)$. The explicit form of their result
is not convenient to obtain asymptotics for the expectation and the variance, as $r$ will
vary with $n$. Canfield and 
Harper \cite{canfieldharp}, Canfield \cite{canfield} made minor modifications on the proof 
of Moser and Wyman \cite{moserwyman} to develop an estimate for $B_{n+h}$, which 
holds uniformly for $h=O(\ln n)$, using a {\sl single} $r=r(n)$ value, as $n\rightarrow \infty$:.
\begin{eqnarray}\label{canf}
B_{n+h} &=& \frac{(n+h)!}{r^{n+h}} \frac{e^{e^r-1}}{(2\pi B)^{1/2}}\\
 &\times & \Biggl(1+\frac{P_0+hP_1+h^2P_2}{e^r} +\frac{Q_0+hQ_1+h^2Q_2+h^3Q_3+h^4Q_4}
 {e^{2r}} \nonumber \\
 &+& O\bigl(e^{-3r}\bigl)\Biggl), \nonumber
\end{eqnarray}
where $B=(r^2+r) e^r$, $P_i$ and $Q_i$ are explicitly known rational functions of $r$.
We list and use in the Maple worksheet \cite{maple1} their exact values from
%$P_i=\Theta(r^{-i})$ and $Q_i=\Theta(r^{-i})$  as $r\rightarrow \infty$. 
Canfield  \cite{bellMoser}. 
%\begin{eqnarray}\label{P1}
%P_1&=&-\frac{r^2+3r+1}{2r(r+1)^2},\\
%P_2 & = & -\frac{1}{2r(r+1)}.\label{P2}
%\end{eqnarray}
Using those, the formula (\ref{canf}) immediately provides  
asymptotics for $\EE(S(n,.))$
and $\DD(S(n,.))$, as in \cite{bellMoser} (note that \cite{bellMoser} only claimed 
$O(r/n)$ error term in (\ref{dSnk})):
\begin{equation}\label{eSnk}
\EE(S(n,.))=\frac{n}{r} -1+ \frac{r}{2(r+1)^2}  +O\bigl(\frac{1}{n}\bigl).
\end{equation}
%For the variance, Harper \cite{harper}  already obtained a 
%sharp result with $o(1)$ error term,
%that was subsequently further improved by Canfield \cite{bellMoser} into:
\begin{equation}\label{dSnk}
\DD^2(S(n,.))=\frac{n}{r(r+1)}  + \frac{r(r-1)}{2(r+1)^4}  -1 +O\bigl(\frac{1}{n}\bigl).
\end{equation}
 With symbolic calculations Salvy and Shackell \cite{salvy} obtained the following asymptotics  {\em just} in terms of $n$, with a compromise at the error term: 
\begin{eqnarray} \label{salvye}
\EE(S(n,.))&=&\frac{n}{\ln n} +\frac{n (\ln \ln n +O(1/\ln n)) }{ \ln^2 n},\\
\DD^2(S(n,.))&=&\frac{n}{\ln^2 n} +\frac{n(2 \ln \ln n -1+O(1/\ln n))  }{ \ln^3 n}. \label{salvyd}
\end{eqnarray}

\section{Phylogenetic trees and set partitions without singletons} 
 \begin{theorem} \label{nosingleton}
For the sequence $A(n,j)=S^{\star}(n,j)$ the central limit theorem (\ref{Snkcentral}) and the
local limit theorem (\ref{Snklocal}) hold with %$E_n=B_n^{\star}$. 
\begin{eqnarray}
\EE(S^\star(n,.)) &= &\frac{n}{r}-r -\frac{1}{2r}+\frac{1}{2r(r+1)^2}+O\bigl(\frac{1}{n}\bigl), 
\label{elso}\\  %Bn*expectation.mw
\DD^2(S^\star(n,.)) &= & \frac{n}{r(r+1)}-r +1-\frac{2}{r+1}-\frac{1}{2(r+1)^2}  \nonumber \\
&-&\frac{1}{2(r+1)^3}+\frac{1}{(r+1)^4}
+{O\bigl(\frac{1}{n}\bigl)}. \label{masodik}
\end{eqnarray}
Furthermore,  the number $k = J_n$ that maximizes $S^{\star}(n, k)$ satisfies
\begin{equation} \label{j1}
J_n=\frac{n}{r} + o(\frac{\sqrt{n}}{r})
\end{equation}
and
\begin{equation} \label{j2}
S^{\star}(n,J_n)=\frac{rB_{n-1}}{\sqrt{2n\pi}}(1+o(1)).
\end{equation}
\end{theorem}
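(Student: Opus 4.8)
The plan is to fit the array $A(n,j)=S^{\star}(n,j)$ into the framework of Section~\ref{method}. Setting $A_n(x)=\sum_k S^{\star}(n,k)x^k$, the theorem rests on three inputs: (i) every $A_n(x)$ has only real, non-positive roots; (ii) $\DD^2(S^\star(n,.))\to\infty$; and (iii) the explicit asymptotics (\ref{elso})--(\ref{masodik}). Given (i) and (ii), the Lindeberg--Feller argument of Section~\ref{method} yields the central limit theorem (\ref{Snkcentral}); Newton's inequality (\ref{newton}) turns (i) into the SLC property, and together with (ii) this gives the local limit theorem (\ref{Snklocal}) and its consequences (\ref{Snkmax}), (\ref{Snkmaxvalue}). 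To establish (i) I would argue by induction on the recurrence $A_n(x)=x\bigl(A_{n-1}'(x)+(n-1)A_{n-2}(x)\bigr)$, coming from $S^{\star}(n,k)=kS^{\star}(n-1,k)+(n-1)S^{\star}(n-2,k-1)$, maintaining an interlacing relation between consecutive polynomials, or else invoke known real-rootedness results for the associated Stirling numbers of the second kind. Item (ii) will be immediate once (\ref{masodik}) is proved, since its leading term $\tfrac{n}{r(r+1)}\sim n/\ln^2 n\to\infty$ dominates the $O(r)$ lower-order terms.

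The analytic heart is (iii), and I would first reduce the moments to Bell numbers. From the exponential generating function $\sum_n A_n(x)\frac{z^n}{n!}=\exp\bigl(x(e^z-1-z)\bigr)$, differentiating in $x$ at $x=1$ and extracting coefficients gives the exact identities
\[
A_n(1)=B^{\star}_n,\qquad A_n'(1)=B_n-B^{\star}_n-nB^{\star}_{n-1},
\]
and
\[
A_n''(1)=B_{n+1}-2B_n-2nB_{n-1}+B^{\star}_n+2nB^{\star}_{n-1}+n(n-1)B^{\star}_{n-2},
\]
where $B^{\star}_m=A_m(1)$ is the number of partitions of an $m$-set into blocks of size $\ge 2$. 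Substituting these into (\ref{keyformula}) expresses $\EE(S^\star(n,.))$ and $\DD^2(S^\star(n,.))$ as rational combinations of $B_{n+1},B_n,B_{n-1}$ and of $B^\star_n,B^\star_{n-1},B^\star_{n-2}$.

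The remaining task is the asymptotics of the $B^\star$ quantities. Here I would use the inverse binomial transform $B^\star_m=\sum_{j\ge 0}(-1)^j\binom{m}{j}B_{m-j}$, which follows from $\sum_m B^\star_m\frac{z^m}{m!}=e^{-z}e^{e^z-1}$. The terms satisfy $\binom{m}{j}B_{m-j}\approx \frac{r^j}{j!}B_m$ (using $B_{m-j}/B_m\approx e^{-jr}$ and $ne^{-r}=r$), so the sum concentrates on $j=O(\ln n)$; one truncates at $j=C\ln n$ with a super-polynomially small tail, inserts the uniform Bell estimate (\ref{canf}) (valid exactly for $|h|=O(\ln n)$) for each $B_{m-j}=B_{m+h}$, re-sums the resulting perturbation of $\sum_j(-1)^jr^j/j!=e^{-r}$, and collects terms. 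Plugging the expansions into the formulas for $\EE$ and $\DD^2$, eliminating $e^r$ via $e^r=n/r$, and truncating at relative error $O(1/n)$ produces (\ref{elso}) and (\ref{masodik}); this final symbolic collection is the step naturally carried out in the Maple worksheet. The main obstacle lies precisely here: because the sum alternates there is heavy cancellation, so the second-order corrections $P_i/e^r$ and $Q_i/e^{2r}$ of (\ref{canf}) genuinely feed into the $O(1/n)$ coefficients, and one must simultaneously control the uniformity of (\ref{canf}) in $h$ and the truncation tail to guarantee that no accumulated error exceeds $O(1/n)$.

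Finally, (\ref{j1}) and (\ref{j2}) follow from the general consequences of the local limit theorem. From (\ref{Snkmax}), $J_n=\EE(S^\star(n,.))+o(\DD(S^\star(n,.)))=\tfrac{n}{r}+o\!\left(\tfrac{\sqrt n}{r}\right)$, since the lower-order terms $-r-\tfrac{1}{2r}+\cdots$ of (\ref{elso}) are $O(r)=o(\sqrt n/r)$ while $\DD=\Theta(\sqrt n/r)$ by (\ref{masodik}). From (\ref{Snkmaxvalue}), $S^\star(n,J_n)\sim \tfrac{A_n(1)}{\sqrt{2\pi}\,\DD(S^\star(n,.))}=\tfrac{B^\star_n}{\sqrt{2\pi}\,\DD(S^\star(n,.))}$; substituting the leading asymptotics $\DD\sim\sqrt{n}/r$ and $B^\star_n\sim B_{n-1}$ (which emerges from the same computation via $B^\star_n\sim e^{-r}B_n$ and $B_n\sim e^{r}B_{n-1}$) yields $S^\star(n,J_n)\sim \tfrac{rB_{n-1}}{\sqrt{2n\pi}}$, as claimed.
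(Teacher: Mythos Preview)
Your proposal is correct and follows the same architecture as the paper: Harper's method via real-rootedness of $A_n(x)=\sum_k S^{\star}(n,k)x^k$, the polynomial recurrence $A_n(x)=x\bigl(A_{n-1}'(x)+(n-1)A_{n-2}(x)\bigr)$ with an interlacing induction, and reduction of the moments to Bell-type numbers followed by Canfield's uniform expansion (\ref{canf}). Your EGF-differentiation formulas for $A_n'(1)$ and $A_n''(1)$ are equivalent (via Becker's identity $B^{\star}_{n+1}=B_n-B^{\star}_n$) to the paper's closed forms obtained from the polynomial recurrence. The one substantive difference is how $B^{\star}_m$ is expressed in terms of Bell numbers. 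You use the binomial transform $B^{\star}_m=\sum_{j}(-1)^j\binom{m}{j}B_{m-j}$, whose terms are of size $\sim r^j B_m/j!$ and thus peak near $j\approx r$, forcing a truncation at $j=O(\ln n)$ together with the tail and uniformity control you correctly flag as the main obstacle. The paper instead iterates Becker's identity to get the unweighted alternating sum $B^{\star}_{m}=B_{m-1}-B_{m-2}+B_{m-3}-\cdots$, in which each successive term is already a factor $\sim r/n$ smaller; hence a \emph{fixed} finite number of terms suffices for the $O(1/n)$ accuracy, and the delicate truncation issue disappears. Both routes arrive at (\ref{elso})--(\ref{masodik}); the paper's is simply the shorter path.
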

It is remarkable that making an asymptotic expansion in terms of $r$ in 
(\ref{elso}), (\ref{masodik}), after a few
terms the error reduces to $O(1/n)$, as in the case of the Bell numbers in (\ref{eSnk}),
(\ref{dSnk}). Using these asymptotic expansions we obtain 
 that $\EE(S^\star(n,.))-\EE(S(n,.))=O(r)$ and $\DD^2(S^\star(n,.))-\DD^2(S(n,.))=O(r)$.
Statement (i) below follows from these remarkably small differences.
\begin{corollary}
(i)  (\ref{salvye}) and  (\ref{salvyd})  still hold
when  $S(n,.)$ is changed to $S^\star(n,.)$. \\
(ii) $A(n,k)=F^{\star}(n,k)$ satisfies (\ref{Snkcentral}) and (\ref{Snklocal}) with
$\EE(F^{\star}(n,.)) = n+1-\EE(S^{\star}(n,.)) = n-n/r +r+1+o(1/r)$ and 
$\DD(F^{\star}(n,.)) = \DD(S^{\star}(n,.)) $.
\end{corollary}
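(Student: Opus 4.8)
The plan is to derive both parts of the Corollary from Theorem \ref{nosingleton}, which I take as given, together with the reflection bijection $F^{\star}(n,k)=S^{\star}(n,n-k+1)$ and the transfer machinery set up in Section \ref{method}. Part (i) is an asymptotic bookkeeping statement comparing $S^{\star}$ with $S$, while part (ii) is the exact analogue, for the pair $(S^{\star},F^{\star})$, of the passage from $S(n,k)$ to $F(n,k)$ carried out at the end of Section \ref{method}.

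For part (i) I would first establish the two differences quoted in the Remark. Subtracting (\ref{eSnk}) from (\ref{elso}), and (\ref{dSnk}) from (\ref{masodik}), the leading $n/r$ and $n/(r(r+1))$ terms cancel exactly, and in each case the dominant surviving term is $-r$; hence $\EE(S^{\star}(n,.))-\EE(S(n,.))=O(r)$ and $\DD^2(S^{\star}(n,.))-\DD^2(S(n,.))=O(r)$. Since $r=r(n)=\ln n-\ln\ln n+O(1)$, these discrepancies are $O(\ln n)$. The key observation is then that the remainders already carried by the Salvy--Shackell expansions dominate $O(\ln n)$: the $O(1/\ln n)$ inside (\ref{salvye}) contributes an absolute error of order $n/\ln^{3}n$, and the one inside (\ref{salvyd}) an error of order $n/\ln^{4}n$, both of which are far larger than $\ln n$. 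Consequently replacing $S$ by $S^{\star}$ perturbs the right-hand sides of (\ref{salvye}) and (\ref{salvyd}) only within their stated error terms, so both formulas hold verbatim for $S^{\star}$.

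For part (ii) the bijection $F^{\star}(n,k)=S^{\star}(n,n-k+1)$ says that $F^{\star}(n,\cdot)$ is the reversal of $S^{\star}(n,\cdot)$ under $k\mapsto n+1-k$. Reversal preserves strict log-concavity and unimodality, exactly as was used to pass from $S(n,k)$ to $F(n,k)$, and the random variable attached to $F^{\star}(n,\cdot)$ is $n+1-Z_{n}$, where $Z_{n}$ is the variable attached to $S^{\star}(n,\cdot)$ in Theorem \ref{nosingleton}. An affine, variance-preserving relabeling leaves the central limit theorem (\ref{Snkcentral}) and the local limit theorem (\ref{Snklocal}) intact (evenness of the Gaussian absorbs the sign change $x\mapsto -x$ in the normalization, and the floor in (\ref{Snklocal}) is handled by the uniformity of the convergence), and yields $\EE(F^{\star}(n,.))=n+1-\EE(S^{\star}(n,.))$ together with $\DD(F^{\star}(n,.))=\DD(S^{\star}(n,.))$. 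Substituting the expansion (\ref{elso}) into $n+1-\EE(S^{\star}(n,.))$ and collecting terms then gives $\EE(F^{\star}(n,.))=n-n/r+r+1$ up to terms of order $1/r$ and $1/n$, which is the stated expansion.

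The main obstacle is the error-term bookkeeping in part (i): everything hinges on verifying that the remainders implicit in (\ref{salvye}) and (\ref{salvyd}) are genuinely of larger order than the $O(r)$ discrepancy, which forces one to use the explicit form $r=\ln n-\ln\ln n+O(1)$ in order to compare $\ln n$ against $n/\ln^{3}n$ and $n/\ln^{4}n$. A secondary point requiring care is the transfer of the local limit theorem under reversal: one must confirm that the SLC hypothesis and the $\lfloor x_{n}\rfloor$-normalization behind (\ref{Snklocal}) behave correctly under $k\mapsto n+1-k$, so that the normal convergence established for $S^{\star}$ in Theorem \ref{nosingleton} really does descend to $F^{\star}$ rather than merely to a reflected, unnormalized array.
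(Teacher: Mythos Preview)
Your proposal is correct and follows essentially the same approach as the paper: for (i) you reproduce exactly the argument sketched in the remark preceding the Corollary (the $O(r)$ discrepancy from subtracting (\ref{eSnk}), (\ref{dSnk}) from (\ref{elso}), (\ref{masodik}) is absorbed into the Salvy--Shackell error terms), and for (ii) you transfer the limit theorems from $S^{\star}$ to $F^{\star}$ via the reflection $k\mapsto n+1-k$, just as the paper does for the pair $(S,F)$ at the end of Section~\ref{method}. Your treatment is in fact more careful than the paper's terse justification, particularly in making explicit the comparison of $O(\ln n)$ against $n/\ln^{3}n$ and in flagging the floor/reversal issue for the local limit theorem.
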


\noindent {\bf Proof} to  Theorem~\ref{nosingleton}.
We start with some facts that we need. Set $B_n^{\star}=\sum_k S^{\star}(n,k) $, the number of all partitions of an $n$-element
set not using singleton classes \cite{encycl} A000296. 
Becker \cite{becker} observed that\footnote{Identity (\ref{difference}) can be proved by the following
bijection from the partitions  with at least one singleton class of an $n$-element set, $[n]$, to the
partitions without singleton classes of an   $n+1$-element set, $[n+1]$: build a new class from
the elements of all singletons and $n+1$.}
  \begin{equation}\label{difference}
B_n=B_{n+1}^{\star}+B_n^{\star}.
\end{equation}
From $B_i=B_i^{\star}+B_{i+1}^{\star}$ for $i=1,2,\ldots , n$, and $B^{\star}_1=0$,
we obtain $B^{\star}_{n+1}=\sum_{i=1}^n B_i (-1)^{n-i}$.
As the $B_n$ sequence is strictly increasing, we immediately obtain   
$B_t-B_{t-1}<B^{\star}_{t+1}=\sum_{i=1}^t B_i (-1)^{t-i}<B_t$ 
for $t\geq 3$, and with $t=n-h$
the   asymptotical formula
\begin{equation}\label{B*onBalt}
B_{n+1}^{\star}=B_n-B_{n-1}+\ldots +(-1)^hB_{n-h} +O(B_{n-h-1}).
\end{equation}
In the special case $h=0$,  using  (\ref{canf}), we obtain:
\begin{equation}\label{B*onB}
B_{n+1}^{\star}=B_n-O(B_{n-1})=B_n \Bigl(1-O\bigl(\frac{r}{n}\bigl)\Bigl).
\end{equation}
(It turns out, as a byproduct, that almost all set partitions contain a singleton.)
We obtain the recurrence relation 
\begin{equation}\label{S*nkrec}
S^{\star}(n,k)= (n-1)S^{\star}(n-2,k-1)  +kS^{\star}(n-1,k),
\end{equation}
 according
to the case analysis whether the $n^{th}$ element is in a doubleton class or not. 
We define the 
polynomial sequence $S_n(x)=\sum_k S^{\star}(n,k)x^k$. It is easy to see that 
  $S_1(x)=0$, $S_2(x)=x$, and for $n\geq 3$ from (\ref{S*nkrec}),
\begin{equation}\label{Spolrec}
S_n(x)=(n-1) x S_{n-2}(x) + xS^{\prime}_{n-1}(x).
\end {equation}
For the proof, first we compute   $\EE(S^{\star}(n,.))$ and    $\DD(S^\star(n,.))$ exactly 
and then asymptotically.
The central and local limit theorems hinge on $\DD(S^\star(n,.))\rightarrow \infty$.
Formulae (\ref{j1}) and (\ref{j2}) follow from
 (\ref{Snkmax})
and (\ref{Snkmaxvalue}), where  $B_n^*$ is approximated with $B_{n-1}$ by (\ref{B*onB}).
Finally, Lemma~\ref{interlace} will provide the non-positive real roots of the generating
polynomial.\\

We obtain  from (\ref{keyformula}),
using (\ref{Spolrec}) repeatedly,
\begin{eqnarray*}
\EE(S^\star(n,.))&=&
\frac{B^{\star}_{n+1}}{B^{\star}_{n}}-n \frac{B^{\star}_{n-1}}{B^{\star}_n};\\
\DD^2(S^\star(n,.))&=&
    \frac{B^{\star}_{n+2}}{B^{\star}_n}+2n\frac{B^{\star}_{n+1}B^{\star}_{n-1}}{(B^{\star}_n)^2}+n(n-1)
 \frac{B^{\star}_{n-2}}{B^{\star}_n}\\
 &- &\Bigl(    \frac{B^{\star}_{n+1}}{B^{\star}_n}  \Bigl)^2 
 -n^2\Bigl(    \frac{B^{\star}_{n-1}}{B^{\star}_n}  \Bigl)^2 -n \frac{B^{\star}_{n-1}}{B^{\star}_n}-(2n+1).
\end{eqnarray*} 
To obtain (\ref{elso}) and (\ref{masodik}),
we started with the closed forms above, used (\ref{B*onBalt}) for the $B^{\star}$ numbers,  and  
substituted the $B$ numbers with (\ref{canf}),  changed $e^{-r}$ to $r/n$.
 For details, see the Maple worksheet \cite{maple1}. 

Induction immediately gives from (\ref{Spolrec}) that for $n\ge 2$ 
\begin{equation} \label{degree}
{\rm deg}( S_n(x)) = \Bigl\lfloor \frac{n}{2}\Bigl\rfloor
\end{equation}
and the root $x=0$ has multiplicity one. Hence $S^{\prime}_n(0)>0$ for $n\geq 2$.

\begin{lemma} Apart from $x=0$, the roots of $S_{2n} (x)$ and  $S_{2n+1} (x)$ are \label{interlace}
negative real numbers and every root occurs
with multiplicity one. Furthermore, if the roots of $S_{2n}(x)$ are denoted by 
$\beta_i^{(2n)}$ in increasing order, and the roots of $S_{2n-1}(x)$, $S_{2n+1}(x)$
are denoted by $\alpha_i^{(2n-1)}$,  $\alpha_i^{(2n+1)}$, both in increasing order, then the following
interlacing properties hold:
$$\beta^{(2n)}_ 1<\alpha^{(2n-1)}_ 1  <\beta^{(2n)}_ 2< \alpha^{(2n-1)}_2  < \dots <  \beta^{(2n)}_ {n-1}< \alpha^{(2n-1)}_{n-1} =0=\beta^{(2n)}_ n,$$
% while the roots of  $S_{2k}(x)$ and $S_{2k+1}$ satisfy   
 $$\beta^{(2n)}_ 1<\alpha^{(2n+1)}_ 1  <\beta^{(2n)}_ 2< \alpha^{(2n+1)}_2  < \dots < \alpha^{(2n+1)}_{n-2} < \beta^{(2n)}_ {n-1}< \alpha^{(2n+1)}_{n-1} <\beta^{(2n)}_ n=0=\alpha^{(2n+1)}_n.$$
\end{lemma}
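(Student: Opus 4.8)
The plan is to prove the whole statement---real, simple, nonpositive roots together with the interlacing---by a single induction on the index, advancing from the pair $(S_{m-1},S_m)$ to the pair $(S_m,S_{m+1})$ by means of the recurrence (\ref{Spolrec}), which with $n=m+1$ reads $S_{m+1}(x)=m\,x\,S_{m-1}(x)+x\,S_m'(x)$. The two displayed interlacing chains are precisely the even-index and odd-index instances of this step: when $m$ is even one has $\deg S_{m+1}=\deg S_m$ and obtains the second chain for index $n$, while when $m$ is odd one has $\deg S_{m+1}=\deg S_m+1$ and obtains the first chain read for $n+1$ in place of $n$. As a preliminary I would record, by an easy induction from (\ref{Spolrec}) and the degree formula (\ref{degree}), that the leading coefficient of every $S_m$ is positive: in the odd case both terms on the right attain the top degree and their positive leading coefficients add, while in the even case the first term alone dominates.

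The heart of the argument is a sign computation at the roots of $S_m$. Assume inductively that the nonzero roots of $S_m$ are simple and negative and that $S_{m-1}$ interlaces $S_m$ (so, apart from the common root $0$, $S_{m-1}$ does not vanish at any root of $S_m$). At a negative root $\rho$ of $S_m$ the recurrence gives $S_{m+1}(\rho)=\rho\bigl(m\,S_{m-1}(\rho)+S_m'(\rho)\bigr)$. Since $\rho$ is a simple root and the leading coefficient is positive, the sign of $S_m'(\rho)$ equals $(-1)^k$, where $k$ is the number of roots of $S_m$ exceeding $\rho$; and since $S_{m-1}$ interlaces $S_m$ and also has positive leading coefficient, the sign of $S_{m-1}(\rho)$ works out to the \emph{same} value, in both parities. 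This reinforcement is the decisive point: the two summands never cancel, so $m\,S_{m-1}(\rho)+S_m'(\rho)$ carries that common sign, and multiplying by $\rho<0$ shows that $S_{m+1}$ takes strictly alternating signs as $\rho$ runs through the negative roots of $S_m$ in increasing order.

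Given the alternation, the intermediate value theorem deposits a root of $S_{m+1}$ strictly between each pair of consecutive negative roots of $S_m$. Two endpoint analyses then complete the location of the roots. Near $0$ the simple-zero fact $S_{m+1}'(0)>0$, established just before the lemma, forces $S_{m+1}$ to be negative immediately to the left of $0$, whereas its value at the largest negative root of $S_m$ is positive; this produces one further root in that last gap. At $-\infty$ the behaviour depends on parity: in the odd, degree-increasing case the sign of $S_{m+1}$ at $-\infty$ is opposite to its sign at the smallest root of $S_m$, yielding an extra leftmost root, while in the even, equal-degree case these two signs agree and no extra root appears. Comparing the roots so produced with the degree formula (\ref{degree}), after removing the simple zero, shows that we have found exactly all of them; hence they are real, simple and nonpositive, and their position relative to the roots of $S_m$ is precisely the claimed interlacing.

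Finally I would verify the base cases directly: from $S_2(x)=S_3(x)=x$, $S_4(x)=x(3x+1)$ and $S_5(x)=x(10x+1)$ the first steps of the induction are immediate. The main obstacle is not any single step but keeping the bookkeeping airtight---confirming the sign reinforcement in both parities and matching the number of interior sign changes, the $0$-endpoint root, and (in the odd case) the $-\infty$ root against the exact degree, so that no root is overlooked or double-counted. This is exactly the place where the degree formula (\ref{degree}) and the simple zero at $x=0$ are indispensable.
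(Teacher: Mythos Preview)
Your argument is essentially the paper's own proof, organised a bit more compactly: evaluate $S_{m+1}$ at the simple negative roots of $S_m$ via the recurrence, observe that $S_{m-1}$ and $S_m'$ carry the same sign there so the two summands reinforce, and then use intermediate-value counting together with the endpoint checks at $0$ and $-\infty$ to pin down exactly $\deg S_{m+1}$ roots in the required positions. One small slip: in your leading-coefficient remark the parities are swapped---when $m$ is odd the term $m\,x\,S_{m-1}(x)$ alone supplies the top degree, while when $m$ is even both terms contribute---but the positivity conclusion you need is unaffected.
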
 
\begin{proof}
We will use mathematical induction on $n$. The roots of $S_2(x)=S_3(x)=x$,
$S_4(x)=3x^2+x$  (roots $\beta_1^{(4)}=-1/3$ and $\beta_2^{(4)}=0$) and 
$S_5(x)=10x^2+x$  (roots $\alpha_1^{(5)}=-1/10$ and $\alpha_2^{(5)}=0$) satisfy 
 Lemma~\ref{interlace}. The inductive step follows from the following two statements
for $k\geq 2$:
\begin{enumerate} [{\rm (i)}]
\item If the roots of $S_{2n-2}(x)$ and $S_{2n-1}(x)$ occur with multiplicity one and satisfy
 $$\beta^{(2n-2)}_ 1<\alpha^{(2n-1)}_ 1  <\beta^{(2n-2)}_ 2< \alpha^{(2n-1)}_2  < \dots < \alpha^{(2n-1)}_{n-2} < \beta^{(2n-2)}_ {n-1}=0= \alpha^{(2n-1)}_{n-1} ,$$ then the roots $\beta^{(2n)}_i$ of $S_{2n}(x)$ satisfy
  $$\beta^{(2n)}_ 1<\alpha^{(2n-1)}_ 1  <\beta^{(2n)}_ 2< \alpha^{(2n-1)}_2  < \dots <  \beta^{(2n)}_ {n-1}< \alpha^{(2n-1)}_{n-1} =0=\beta^{(2n)}_ n.$$

\item  If the roots of $S_{2n-1}(x)$ and $S_{2n}(x)$ occur with multiplicity one and satisfy
 $$\beta^{(2n)}_ 1<\alpha^{(2n-1)}_ 1  <\beta^{(2n)}_ 2< \alpha^{(2n-1)}_2  < \dots <  \beta^{(2n)}_ {n-1}< \alpha^{(2n-1)}_{n-1} =0=\beta^{(2n)}_ n$$
  then the roots $\alpha^{(2n+1)}_i$  of $S_{2n+1}(x)$ satisfy
    $$\beta^{(2n)}_ 1<\alpha^{(2n+1)}_ 1  <\beta^{(2n)}_ 2< \alpha^{(2n+1)}_2  < \dots < \alpha^{(2n+1)}_{n-2} < \beta^{(2n)}_ {n-1}< \alpha^{(2n+1)}_{n-1} <\beta^{(2n)}_ n=0=\alpha^{(2n+1)}_n.$$
\end{enumerate}
First we prove (i).
In our setting  the identity (\ref{Spolrec}) specifies to 
 \begin{equation} \label{SpolrecA}
 S_{2n}(x)/x=(2n-1)S_{2n-2}(x)+ S^{\prime}_{2n-1}(x),
 \end{equation}
where the RHS is the sum of two polynomials of degree $n-1$ and $n-2$, respectively.

Set $\alpha_0^{(2n-1)}=-\infty$.  The proof hinges on the following three claims:\\
$\bullet$ The sign of $S_{2n-2}(x)$ alternates on $\alpha_i^{(2n-1)},\alpha_{i+1}^{(2n-1)}$ for
$i=0,1,...,n-3$;\\
$\bullet$ The sign of $S^{\prime}_{2n-1}(x)$ alternates on $\alpha_i^{(2n-1)},\alpha_{i+1}^{(2n-1)}$ for
$i=1,...,n-2$; and \\
$\bullet$ $\sign(S_{2n-2}(\alpha_1^{(2n-1)}))= \sign(S^{\prime}_{2n-1}(\alpha_1^{(2n-1)}))$.\\
The first claim follows from the hypotheses.\\
The second claim follows from the fact that $S^{\prime}_{2n-1}(x)$ is a polynomial of degree $n-2$
 and it has exactly one root in every interval 
 $(\alpha_i^{(2n-1)},\alpha_{i+1}^{(2n-1)})$ for $i=1,2,...,n-2$, as it must have a root 
 between consecutive roots of $S_{2n-1}(x)$.\\
The third claim follows from the facts that\\ $\sign(S_{2n-2}(\alpha_1^{(2n-1)}))=
-\sign(S_{2n-2}(-\infty))=-(-1)^{n-1}$, as $S_{2n-2}(x)$ has a single root, $\beta_1^{(2n-2)}$,
which is less than  $\alpha_1^{(2n-1)}$; and $\sign(S^{\prime}_{2n-1}(\alpha_1^{(2n-1)}))=
\sign(S^{\prime}_{2n-1}(-\infty)))=(-1)^{n-2}$, as $S^{\prime}_{2n-1}(x)$ has no root less than
  $\alpha_1^{(2n-1)}$.

{}From the three claims and (\ref{SpolrecA}) follows that the sign of $S_{2n}(x)/x$, and hence 
of $S_{2n}(x)$,
alternates on  $\alpha_i^{(2n-1)},\alpha_{i+1}^{(2n-1)}$ for
$i=1,...,n-3$; and this fact provides the required $\beta_{i+1}^{(2n)}$ root between these numbers,
$i=1,...,n-3$.

{}From the proof of the third claim and (\ref{SpolrecA}) follows that 
 $\sign(S_{2n}(\alpha_1^{(2n-1})/  \alpha_1^{(2n-1)}   )=(-1)^n$. If we show that 
 $S_{2n}(x)/x$ has a different sign at $-\infty$, then we provided the required 
 $\beta_1^{(2n)}<\alpha_1^{(2n-1)}$ root for $S_{2n}(x)/x$, and hence for $S_{2n}(x)$.
Indeed, the degree of $S_{2n-2}(x)$ is greater than the degree of $S^{\prime}_{2n-1}(x)$, and therefore
the sign of $S_{2n}(x)/x$ at $-\infty$ is the sign of $S_{2n-2}(x)$ at $-\infty$, namely $(-1)^{n-1}$.

As $S_{2n-2}(\alpha_{n-1}^{(2n-1)})=S_{2n-2}(0)=0$, the second and the third claim,
 and (\ref{SpolrecA}) imply that $S_{2n}(x)/x$ alternates on 
 $\alpha_{n-2}^{(2n-1)},\alpha_{n-1}^{(2n-1)}$, providing the required root $\beta_{n-1}^{(2n)}$ 
between these numbers, also for $S_{2n}(x)$. Finally, the last root to find is 
 $\beta_{n}^{(2n)}=0$. 

\bigskip
\noindent Next we prove (ii).
In our setting  the identity (\ref{Spolrec}) specifies to 
\begin{equation}\label{SpolrecB}
S_{2n+1}(x)/x=2nS_{2n-1}(x)+ S^{\prime}_{2n}(x),
\end{equation}
where the RHS is the sum of two polynomials of degree $n-1$. 
The proof hinges on the following three claims:\\
$\bullet$ The sign of $S_{2n-1}(x)$ alternates on $\beta_i^{(2n)},\beta_{i+1}^{(2n)}$ for
$i=1,...,n-2$;\\
$\bullet$ The sign of $S^{\prime}_{2n}(x)$ alternates on  $\beta_i^{(2n)},\beta_{i+1}^{(2n)}$ for
$i=1,...,n-1$; and \\
$\bullet$ $\sign(S_{2n-1}(\beta_1^{(2n}))= \sign(S^{\prime}_{2n}(\beta_1^{(2n)}))$.\\
The first claim follows from the hypotheses.\\
The second claim follows from the fact that $S^{\prime}_{2n}(x)$ is a polynomial of degree $n-1$
 and it has exactly one root in every interval 
$\beta_i^{(2n)},\beta_{i+1}^{(2n)}$
 for $i=1,2,...,n-1$, as it must have a root 
 between consecutive roots of $S_{2n}(x)$.\\
 The third claim follows from the facts that\\ $\sign(S_{2n-1}(\beta_1^{(2n}))=
\sign(S_{2n-1}(-\infty))=(-1)^{n-1}$, and $\sign(S^{\prime}_{2n}(\beta_1^{(2n)}))=
\sign(S^{\prime}_{2n}(-\infty)))=(-1)^{n-1}$, as neither $S^{\prime}_{2n}(x)$ nor $S_{2n-1}(x)$
 has a root less than
  $\beta_1^{(2n)}$.

{}From the three claims and (\ref{SpolrecB}) follows that the sign of $S_{2n+1}(x)/x$, and hence 
of $S_{2n+1}(x)$,
alternates on  $\beta_i^{(2n)},\beta_{i+1}^{(2n)}$ for
$i=1,...,n-2$; and this fact provides the required $\alpha_{i}^{(2n+1)}$ root between these numbers,
$i=1,...,n-2$.

As $S_{2n-1}(\beta_{n}^{(2n)})=S_{2n-1}(0)=0$, the second and the third claim,
 and (\ref{SpolrecB}) imply that $S_{2n+1}(x)/x$ alternates on 
  $\beta_{n-1}^{(2n)},\beta_{n}^{(2n)}$,
 providing the required root $\alpha_{n-1}^{(2n+1)}$ 
between these numbers, also for $S_{2n}(x)$. Finally, the last root to find is 
 $\alpha_{n}^{(2n+1)}=0$. 
\end{proof}

\section{Phylogenetic trees and set partitions in another distribution} 

\begin{theorem} \label{another}
For the array $A(n,j)=T_{n+1,j}$,  the central limit theorem (\ref{Snkcentral}) and the
local limit theorem (\ref{Snklocal}) hold with 
$$\EE(T_{n+1,.})= \frac{1-\rho}{2\rho }n+\frac{3/4-\ln 2}{\rho}+
O(1/n) \hbox{ \ \ \ \ and \ \ \ \  }$$
$$ \DD^2(T_{n+1,.})= \frac{n}{4}\Bigl(\frac{1}{\rho^2}
- \frac{2}{\rho}-1\Bigl)+  \frac{1+4\ln 2- 8\ln^2 2}{8\rho^2}                     +O(1/n),
$$ 
where $\rho=-1+2\ln 2$.
Furthermore,  the number $k = J_n$ that maximizes $T_{n+1, k}$ satisfies
\begin{equation} \label{j3}
J_{n}=\frac{1-\rho}{2\rho  }n + o(\sqrt{n}),
\end{equation}
and
\begin{equation} \label{j4}
T_{n+1,J_{n}}=\frac{n!(1+o(1))}{ \pi\sqrt{2}n\rho^{n+\frac{1}{2}}   \sqrt{(\frac{1}{\rho^2}
- \frac{2}{\rho}-1)}} .
\end{equation}
\end{theorem}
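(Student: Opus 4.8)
The plan is to mirror the structure used for Theorem~\ref{nosingleton}: produce the generating polynomial and its recurrence, read $\EE$ and $\DD^2$ off from (\ref{keyformula}), show that all roots are real and non-positive, and then invoke Harper's machinery from Section~\ref{method}. Set $U_n(y)=\sum_m T_{n,m}y^m$, so that for the array of Theorem~\ref{another} one has $A_n(y)=U_{n+1}(y)$ and $A_n(1)=U_{n+1}(1)=t_{n+1}$. Feeding (\ref{csere}) into the recurrence (\ref{S*nkrec}) yields $T_{n,m}=(n+m-2)T_{n-1,m-1}+mT_{n-1,m}$ (one checks this against $T_{3,2}=3$, $T_{4,2}=10$, etc.), which translates into the polynomial recurrence
\begin{equation}\label{Urec}
U_n(y)=(n-1)y\,U_{n-1}(y)+y(1+y)U_{n-1}'(y),\qquad U_2(y)=y.
\end{equation}
In parallel I would record the exponential generating function $\phi(z,y)$ of phylogenetic trees, in which $y$ marks internal vertices, $U_n(y)=n!\,[z^n]\phi(z,y)$, and which satisfies $\phi=z+y(e^{\phi}-1-\phi)$; this functional equation is the engine for the asymptotics, while (\ref{Urec}) is the engine for the root structure.

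Real-rootedness is cleanest through an integrating factor. Rewriting (\ref{Urec}) as
\begin{equation}\label{intfac}
U_n(y)=y(1+y)^{2-n}\,\frac{d}{dy}\Bigl[(1+y)^{n-1}U_{n-1}(y)\Bigr],
\end{equation}
I would argue by induction that $U_n(y)$ has only real non-positive roots. If $U_{n-1}$ does, then $(1+y)^{n-1}U_{n-1}(y)$ does too; its derivative is real-rooted with all roots in the convex hull of those of $(1+y)^{n-1}U_{n-1}$, hence non-positive, and carries $y=-1$ with multiplicity at least $n-2$, so multiplication by $(1+y)^{2-n}$ returns a polynomial, after which the factor $y$ restores the root at $0$. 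The base cases $U_2,U_3,U_4$ are immediate. By Newton's inequality (\ref{newton}) this gives the SLC property of $T_{n+1,\cdot}$, and it supplies exactly the non-positive real roots that Harper's method needs.

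For the asymptotics I would apply singularity analysis to $\phi(\cdot,y)$ uniformly for $y$ in a fixed neighborhood of $1$. Since $\phi=z+y(e^{\phi}-1-\phi)$ is a smooth implicit-function schema, $\phi(\cdot,y)$ has a square-root singularity at $z=z(y)$ given by the characteristic equation $1-y(e^{\phi}-1)=0$; solving it together with the functional equation gives the explicit branch point
\begin{equation}\label{singloc}
z(y)=(1+y)\ln\frac{1+y}{y}-1,\qquad z(1)=2\ln 2-1=\rho.
\end{equation}
The transfer theorems then yield, uniformly near $y=1$, $[z^{N}]\phi(z,y)=\frac{C(y)}{2\sqrt\pi}\,N^{-3/2}z(y)^{-N}\bigl(1+O(1/N)\bigr)$ with amplitude $C(y)$ read off from $\phi=\phi^*(y)-C(y)\sqrt{1-z/z(y)}+\cdots$. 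Taking $N=n+1$,
\begin{equation}\label{logA}
\log A_n(y)=-(n+1)\log z(y)+\log C(y)+\mathrm{const}+O(1/n),
\end{equation}
where $\mathrm{const}$ collects the $y$-independent factors (so it dies under $d/dy$) and the remainder has $y$-derivatives still $O(1/n)$ by uniformity. Because (\ref{keyformula}) gives $\EE=(\log A_n)'(1)$ and $\DD^2=(\log A_n)'(1)+(\log A_n)''(1)$, differentiating (\ref{logA}) at $y=1$ reproduces the stated leading terms straight from $z(1)=\rho$, $z'(1)=\ln2-1$, $z''(1)=\tfrac12$: the coefficient of $n$ in $\EE$ is $-z'(1)/z(1)=(1-\rho)/(2\rho)$, and that in $\DD^2$ is $-\bigl[z'/z+z''/z-(z'/z)^2\bigr]_{y=1}=\tfrac14(\rho^{-2}-2\rho^{-1}-1)$. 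The $O(1)$ constants require $C(1),C'(1),C''(1)$ together with the shift from the $(n+1)$- to the $n$-normalization; this is routine but lengthy and is best delegated to the Maple worksheet \cite{maple1}.

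Finally I would assemble the conclusions. The coefficient $\tfrac14(\rho^{-2}-2\rho^{-1}-1)$ is positive (numerically $\approx 0.13$), so $\DD^2(T_{n+1,\cdot})\to\infty$; with the non-positive real roots this gives the central limit theorem (\ref{Snkcentral}) via the Lindeberg--Feller argument of Section~\ref{method}, and the SLC property then gives the local limit theorem (\ref{Snklocal}). The maximizing index (\ref{j3}) and maximal value (\ref{j4}) follow from (\ref{Snkmax}) and (\ref{Snkmaxvalue}), using the $y=1$ specialization $t_{n+1}\sim \frac{n!}{2\sqrt{\pi n}}\,\rho^{-(n+1/2)}$ (equivalently $C(1)=\sqrt{\rho}$) to convert $A_n(1)/\DD(Z_n)$ into the closed form of (\ref{j4}). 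The main obstacle is the uniform-in-$y$ singularity analysis with a genuine $O(1/n)$ remainder: one must check the implicit-schema hypotheses and the existence of a common $\Delta$-domain on a neighborhood of $y=1$, and then justify differentiating the coefficient asymptotics in $y$; the symbolic extraction of $C(y)$ and of the explicit $O(1)$ constants is the other laborious, though mechanical, part.
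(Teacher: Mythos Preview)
Your argument is sound but diverges from the paper's in two places. For real-rootedness, the paper's Lemma~\ref{Pnegroots} argues directly by Rolle's theorem and sign-tracking: with $P_n(x)=\sum_k T_{n+1,k}x^k$ (your $U_{n+1}$), it shows inductively that $P_n$ has $n$ simple real roots in $(-1,0]$, using $\sign(P_n(-1))=(-1)^n$ and the alternation of $P_{n+1}$ on the critical points of $P_n$ coming from (\ref{Prec}). Your integrating-factor identity $U_n(y)=y(1+y)^{2-n}\bigl[(1+y)^{n-1}U_{n-1}(y)\bigr]'$ is slicker and yields the same conclusion in one stroke, since derivatives preserve real-rootedness and Gauss--Lucas confines the roots to $[-1,0]$. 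For the asymptotics, the paper avoids bivariate singularity analysis entirely: it first extracts the \emph{exact} formulas (\ref{te}) and (\ref{td}) for $\EE$ and $\DD^2$ as combinations of the ratios $t_{n+2}/t_{n+1}$ and $t_{n+3}/t_{n+1}$, obtained by iterating (\ref{Prec}) and evaluating at $x=1$, and then only needs the \emph{univariate} singular expansion of $H(1,z)$ near $z=\rho$ to produce the three-term asymptotic (\ref{tnasympfurther}) for $t_n$. This sidesteps precisely the obstacle you flag---no uniformity in $y$ is required, and no coefficient asymptotics need to be differentiated in a parameter. Your quasi-power route is more systematic and makes the moving singularity $z(y)=(1+y)\ln\tfrac{1+y}{y}-1$ explicit; the paper's route is more elementary and delivers the stated $O(1/n)$ error with less analytic overhead.
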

\noindent Identity (\ref{csere}) immediately implies the following central and local limit theorems
as corollaries:
\begin{equation}
\frac{1}{t_{n+1}} \sum_{j=1}^{\lfloor x_n\rfloor} S^{\star}(n+j,j)\rightarrow \frac{1}{\sqrt{2\pi}} \int_{-\infty}^xe^{-t^2/2}dt 
\end{equation}
and 
\begin{equation}
 \lim_{n\rightarrow \infty} \frac{\DD(T_{n+1,.})}{t_{n+1}}  S^{\star}(n+\lfloor x_n\rfloor,  \lfloor x_n\rfloor )\rightarrow
 \frac{1}{\sqrt{2\pi}} e^{-x^2/2}
\end{equation}
as $n\rightarrow \infty$,   uniformly in $x$, with
$x_n=\EE(T_{n+1,.})+x\DD(T_{n+1,.}).$
%and  $\EE(T_{n,.})$  and $\DD(T_{n,.})$ are defined by (\ref{te}) and (\ref{td}).

\noindent {\bf Proof} to Theorem~\ref{another}. Felsenstein \cite{felspaper, felsbook}  proved the recurrence relation\footnote{The recurrence is based on a case analysis whether the $n^{th}$ leaf is to
be grafted into an edge  or to be joined to an internal vertex  of an already existing tree
with $n-1$ leaves.} 
\begin{equation}\label{felsrec}
T_{n,k}=(n+k-2)T_{n-1,k-1}+kT_{n-1,k}
\end{equation}
 for $k>1$ with the initial condition $T_{n,1}=1$
for $n>1$. 
Consider the polynomials $P_n(x)=\sum_k T_{n+1,k}x^k$. Then $P_n(1)=t_{n+1}$ and the degree
of $P_n(x)$ is $n$. Felsenstein's recurrence 
relation  (\ref{felsrec}) implies the identity 
\begin{equation} \label{Prec}P_n(x)=nxP_{n-1}(x)+(x+x^2)P^{\prime}_{n-1}(x)
\end{equation}
with initial term $P_0(x)=1$, $P_1(x)=T_{2,1}x=x$.
%, $P_2(x)=3x^2+x$, $P_3(x)=15x^3+10x^2+x$.
We have for the expectation and variance, from (\ref{keyformula}),
using (\ref{Prec}) repeatedly,
\begin{eqnarray}
\EE(T_{n+1,.})&=&
\frac{t_{n+2}}{2t_{n+1}}- \frac{n+1}{2};\label{te}\\
\DD^2(T_{n+1,.})&=&
\frac{t_{n+3}}{4t_{n+1}}-\frac{t^2_{n+2}}{4t^2_{n+1}}-\frac{t_{n+2}}{2t_{n+1}}-
\frac{n+1}{4}. \label{td}
\end{eqnarray}
Consider the following bivariate generating function for $T_{n,k}$:  
\begin{equation}
\label{bivariate}
H(x,z)=\sum_{n\geq 1} \sum_k T_{n,k}x^k\frac{z^n}{n!}=\sum_{n\geq 1}  P_{n-1}(x)  \frac{z^n}{n!},
\end{equation}
in particular, $H(1,z)=
\frac{z}{1!}+\frac{z^2}{2!}+\frac{4z^3}{3!}+\frac{26z^4}{4!}+...
$ .
Flajolet \cite{flajolet}  observed the functional equation 
\begin{equation}
\label{funceq}
H(x,z)=z+x\Bigl(e^{H(x,z)}-1-H(x,z) \Bigl),
\end{equation}
which immediately follows from the Exponential Formula,
and  obtained from this equation an expression for $H(1,z)$ in terms of the Lambert function: %, which is the compositional inverse of $xe^{-x}$: 
$$H(1,z)=-LambertW\Bigl(
-\frac{1}{2}e^{\frac{z-1}{2}}\Bigl)+\frac{z-1}{2}.$$
He also  observed that $H(1,z)$, the EGF of the 
$t_n$ sequence, has a  singularity at $\rho=-1+2\ln 2$, and it is the only singularity at
this radius; and furthermore, for $|z|<\rho$,  there is a singular expansion of $H(1,z)$
in terms of $\Delta=
\sqrt{1-z/\rho}$, of which the first few terms are
\begin{equation}\label{singexp}
H(1,z)=\ln 2 -\sqrt{\rho} \Delta +\Bigl(\frac{1}{6}-\frac{1}{3} \ln 2\Bigl)\Delta^2-\frac{\rho^{3/2}}{36}
\Delta^3 +O(\Delta^4).
\end{equation}
Flajolet \cite{flajolet}  used (\ref{singexp}) %
to obtain asymptotic formula for $t_n$ 
%as \begin{equation}\label{tnasymp}
%t_n\sim \frac{n!}{2\sqrt{\pi} n^{3/2} \rho^{n-1/2}}, \end{equation}
and noted that asymptotic expansion can be obtained by this method. 
Using Maple, we went further and  obtained the following asymptotic expansion:
\begin{equation}\label{tnasympfurther}
t_n\sim \frac{n!}{\sqrt{\pi} \rho^{n-\frac{1}{2}}}\Biggl( \frac{1}{2n^{3/2}}   + \frac{3}{16n^{5/2}} + 
 \frac{25}{256n^{7/2}} +  O\Bigl( \frac{1}{n^{9/2}}  \Bigl)          \Biggl).
\end{equation}
Combining (\ref{te}) and (\ref{td}) with (\ref{tnasympfurther}), one obtains the asymptotics
for the expectation and the variance in Theorem~\ref{another}.
The details are on a Maple worksheet \cite{maple2}.

\begin{lemma} \label{Pnegroots}
For $n\geq 1$, the polynomial $P_n(x)$ has $n$ distinct real roots, one of them is zero, and the
other $n-1$ roots  are in the open interval  $(-1,0)$. 
\end{lemma}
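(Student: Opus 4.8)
The plan is to prove the statement by induction on $n$, using the lemma itself as the induction hypothesis, and to extract all the needed information about $P_n$ by evaluating the recurrence (\ref{Prec}) at carefully chosen points. First I would record three preliminary facts that follow directly from (\ref{Prec}) together with $P_0(x)=1$, $P_1(x)=x$. Comparing leading coefficients in $P_n(x)=nxP_{n-1}(x)+(x+x^2)P^{\prime}_{n-1}(x)$ shows that the leading coefficient of $P_n$ equals $(2n-1)$ times that of $P_{n-1}$, so $P_n$ has degree $n$ and positive leading coefficient. Since every term on the right of (\ref{Prec}) carries a factor $x$, we get $P_n(0)=0$ for all $n\ge 1$, so $0$ is always a root. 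Finally, dividing (\ref{Prec}) by $x$ and evaluating at $0$ gives $P^{\prime}_n(0)=nP_{n-1}(0)+P^{\prime}_{n-1}(0)=P^{\prime}_{n-1}(0)$ for $n\ge 2$, whence $P^{\prime}_n(0)=P^{\prime}_1(0)=1>0$; thus $0$ is a simple root and $P_n(x)<0$ just to the left of $0$. The base cases $n=1,2$ (and $n=3$ if convenient) are checked by hand, e.g.\ $P_2(x)=3x^2+x$ with roots $0,-1/3$.

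For the inductive step, suppose $P_{n-1}$ has $n-1$ distinct real roots $t_1<t_2<\cdots<t_{n-2}<t_{n-1}=0$ with $t_1,\dots,t_{n-2}\in(-1,0)$. The two evaluations that drive the argument come from (\ref{Prec}): at a nonzero root $t_i$ one has $P_{n-1}(t_i)=0$, so $P_n(t_i)=t_i(1+t_i)P^{\prime}_{n-1}(t_i)$; and at $x=-1$ the factor $x+x^2$ vanishes, giving $P_n(-1)=-nP_{n-1}(-1)$. Because the roots of $P_{n-1}$ are simple and its leading coefficient is positive, the derivative alternates in sign on them, with $\sign\bigl(P^{\prime}_{n-1}(t_i)\bigr)=(-1)^{(n-1)-i}$. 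Since $t_i(1+t_i)<0$ for $t_i\in(-1,0)$, this yields $\sign\bigl(P_n(t_i)\bigr)=(-1)^{\,n-i}$ for $i=1,\dots,n-2$. These signs alternate, so between each pair of consecutive nonzero roots $t_i,t_{i+1}$ there is a sign change, producing $n-3$ roots of $P_n$ in the interiors of those intervals.

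It remains to locate the two boundary roots and to count. Near the top, $\sign\bigl(P_n(t_{n-2})\bigr)=(-1)^{2}=+1$ while $P_n$ is negative just to the left of $0$, so there is a root in $(t_{n-2},0)$. Near the bottom, the recursion $P_n(-1)=-nP_{n-1}(-1)$ starting from $P_1(-1)=-1$ gives $\sign\bigl(P_n(-1)\bigr)=(-1)^n$, which is opposite to $\sign\bigl(P_n(t_1)\bigr)=(-1)^{n-1}$, so there is a root in $(-1,t_1)$. Adding up, $P_n$ has $(n-3)+1+1=n-1$ roots in $(-1,0)$, plus the root at $0$; since $\deg P_n=n$, these are all of its roots, so they are all real, all simple, and all in $(-1,0]$, which is exactly the claim for $P_n$. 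I expect the main obstacle to be purely the parity bookkeeping: keeping the alternation indices consistent and verifying that the two boundary intervals each contribute exactly one additional root rather than zero or two. The structurally essential point, worth isolating, is that the quadratic factor $x+x^2=x(1+x)$ in (\ref{Prec}) vanishes at $-1$; this is precisely what makes $P_n(-1)=-nP_{n-1}(-1)$ and thereby pins every nonzero root strictly above $-1$.
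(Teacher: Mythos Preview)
Your proof is correct and follows essentially the same inductive sign-alternation strategy as the paper. The one minor difference is the choice of test points: the paper evaluates $P_{n+1}$ at the critical points $\beta_i$ of $P_n$ (where the derivative term of (\ref{Prec}) vanishes, giving $P_{n+1}(\beta_i)=(n+1)\beta_i P_n(\beta_i)$), whereas you evaluate $P_n$ at the roots $t_i$ of $P_{n-1}$ (where the first term vanishes, giving $P_n(t_i)=t_i(1+t_i)P'_{n-1}(t_i)$); the two choices are dual and lead to the same interlacing conclusion.
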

\begin{proof}
We prove the theorem with mathematical induction on $n$. The small cases above are 
easy to verify.
It is easy to see (by a different induction) that $P_1(-1)=-1$ and
from (\ref{Prec}),  $P_n(-1)=(-n)P_{n-1}(-1)$, thus
\begin{equation} \label{P(-1)}
\sign(P_n(-1))=(-1)^{n}.
\end{equation}
Using the induction hypothesis, let the roots of $P_{n}(x)$ be 
$$-1<\alpha_1<\cdots <\alpha_{n-2}<\alpha_{n-1}<\alpha_n=0.$$
By Rolle's theorem, $P^{\prime}_{n}(x)$ has a root  $\beta_i$ in $(\alpha_i, \alpha_{i+1})$ for
$i=1,2,...,n-1$. From  (\ref{Prec}), observe that
$\sign(P_{n+1}(\beta_i))=-\sign(P_n(\beta_i))$. As the sign of $P_n(x)$ must alternate
on the $\beta_i$, so does $P_{n+1}(x)$, and therefore $P_{n+1}(x)$ has a root
in $(\beta_i,\beta_{i+1})$ for $i=1,2,...,n-2$. We have to find 3 more roots: one is $x=0$, 
and we will show that the other two are in the intervals $(-1,\beta_1) $ and 
$(\beta_{n-1},0)$, respectively. 

Indeed, $\sign(P_n(x))$ differs in $-1$ and $\beta_1$, since $P_n(x)$ has a single root
$\alpha_1$ between.   Also, 
$\sign(P_{n+1}(-1))=-$
$\sign(P_n(-1))$ by (\ref{P(-1)}) and $\sign(P_{n+1}(\beta_1))=-\sign(P_n(\beta_1))$ from
our earlier observation. Hence,  $\sign(P_{n+1}(x))$ differs in $-1$ and $\beta_1$,  and therefore $P_{n+1}(x)$ has a  root in $(-1,\beta_1) $.

Observe  (\ref{Prec}) with induction implies that
for $n\ge 1$ the coefficient of $x^n$ in $P_n(x)$ is positive. On one hand,
we have that for $x<0$ but $x$ sufficently close to zero, $\sign(P_{n+1}(x))=-1$. On the other hand,
$\sign(P_{n+1}(\beta_1))=-\sign(P_{n+1}(-1))=(-1)^n$, $\sign(P_{n+1}(\beta_i))=(-1)^{n+i-1}$,
and $\sign(P_{n+1}(\beta_{n-1}))=1$. Therefore $P_{n+1}(x)$  has a root in  $(\beta_{n-1},0)$.
\end{proof}

%\noindent Flajolet \cite{flajolet} computed   asymptotics for $\EE(Z_{n+1})$. In addition,
%we compute the  variance. The details are in a Maple worksheet \cite{maple2}.

\bibliographystyle{plain}

%Darboux formula 

\end{document}